\newtheorem{theorem}{Theorem}
\newtheorem{proposition}{Proposition}
\newtheorem{lemma}{Lemma}
\begin{document}
\title{A characterisation of virtually free groups}

\author[R.~Gilman]{Robert~H.~Gilman}
\address{Department of Mathematics\\
Stevens Institute of Technology\\
Hoboken NJ 07030, USA}
\email{rgilman@stevens-tech.edu}

\author[S.~Hermiller]{Susan Hermiller}
\address{Department of Mathematics\\
        University of Nebraska\\
         Lincoln NE 68588-0130, USA}
\email{smh@math.unl.edu}

\author[Derek F.~Holt]{Derek F.~Holt}
\address{Mathematics Institute\\
      University of Warwick  \\
       Coventry CV4 7AL, UK  }
\email{dfh@maths.warwick.ac.uk}

\author[Sarah Rees]{Sarah Rees}
\address{School of Mathematics and Statistics\\
       University of Newcastle \\
       Newcastle NE1 7RU, UK  }
\email{Sarah.Rees@ncl.ac.uk}

\maketitle

\begin{abstract}
We prove that a finitely generated group $G$ is virtually free if and only if
there exists a generating set for $G$ and $k > 0$ such that all $k$-locally
geodesic words with respect to that generating set are geodesic.
\end{abstract}

\begin{center}
{\bf Keywords:} Virtually free group; Dehn algorithm; word problem.

{\bf Mathematics Subject Classification:} 20E06; Secondary 20F67.
\end{center}

\section{Introduction}
A group is called virtually free if it has a free subgroup of finite index.

In this article we characterise finitely generated virtually free
groups by the property that a Dehn algorithm reduces any word
to geodesic form. Equivalently, a group is virtually free precisely when
the set of $k$-locally geodesic words and the set of geodesic words coincide
for suitable $k$ and appropriate generating set.

Let $G$ be a group with finite generating set $X$.
We shall assume throughout this article that all generating sets of
groups are closed under the taking of inverses.
For a word $w=x_1\cdots x_n$ over $X$,
we define $l(w)$ to be the length $n$ of $w$ as a string,
and $l_G(w)$ to be the length of the shortest word representing the same
element as $w$ in $G$.
Then $w$ is called a {\em geodesic} if $l(w)=l_G(w)$, and 
a {\em $k$-local geodesic} if every subword of $w$ of length at most 
$k$ is geodesic.

Let $\mathcal{R}$ be a finite set of length-reducing rewrite rules
for $G$; that is, a set of substitutions
\[ u_1\rightarrow v_1,
u_2\rightarrow v_2, \ldots, u_r \rightarrow v_r, \]
where $u_i =_G v_i$ and $l(v_i) < l(u_i)$ for $1 \le i \le r$.
Then $\mathcal{R}$ is called a {\em Dehn algorithm} for $G$ over $X$
if repeated application of these rules reduces any representative of the
identity to the empty word.
It is well-known that a group has a Dehn algorithm if and only
if it is word-hyperbolic \cite{Alonso}.

More generally (that is, even outside of the group theoretical context),
if $L$ is any set of strings over an alphabet $X$ (or, in
other words, $L$ is any {\em language} over $X$), we shall call
$L$ {\em $k$-locally excluding} if there exists a finite set $F$ of strings of
length at most $k$ such that a string $w$ over $X$ is in $L$ if and
only if $w$ contains no substring in $F$.
It is clear that the set of $k$-local geodesics in a 
group is $k$-locally excluding, since we can choose $F$ to be the set of
all non-geodesic words of length at most $k$.
We observe in passing that if a set of strings is $k$-locally excluding
then, by definition, it is a $k$-locally testable and hence locally testable 
language (see \cite{Pin}).

We shall say that the group $G$ is $k$-locally excluding over a finite
generating set $X$ when
the set of geodesics of $G$ over $X$ is $k$-locally excluding.

The purpose of this paper is to prove the following theorem.
\begin{theorem}\label{main}
Let $G$ be a finitely generated group. Then the following are equivalent.
\begin{enumerate}
\item[(i)] $G$ is virtually free.
\item[(ii)] There exists a finite generating set $X$ for $G$ and a
finite set of length-reducing rewrite rules over $X$ whose application
reduces any word over $X$ to a geodesic word; 
that is $G$ has a Dehn algorithm that reduces all words to geodesics.
\item[(iii)] There exists a finite generating set $X$ for $G$ and an integer
$k$ such that every $k$-locally geodesic word over $X$ is a geodesic;
that is, $G$ is $k$-locally excluding over $X$.
\end{enumerate}
\end{theorem}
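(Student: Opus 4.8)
The plan is to prove the elementary equivalence (ii) $\Leftrightarrow$ (iii) first, and then to close the loop with the implications (i) $\Rightarrow$ (ii) and (iii) $\Rightarrow$ (i). For (ii) $\Leftrightarrow$ (iii), note that a word is geodesic precisely when it contains no non-geodesic subword. Given a Dehn algorithm as in (ii), the left-hand sides $u_i$ of the rules form a finite set $F$ of excluded substrings: a word contains some $u_i$ iff a rule applies iff, since the algorithm terminates at a geodesic, the word is non-geodesic; thus the geodesics are $k$-locally excluding with $k=\max_i l(u_i)$. Conversely, given the finite excluded set $F$ of (iii), replace each $u \in F$ by a geodesic $v$ with $v =_G u$ to obtain length-reducing rules; repeated application terminates, and it can only halt at a word containing no member of $F$, hence at a geodesic. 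Both directions are purely combinatorial.

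For (i) $\Rightarrow$ (ii), I would use the structure theorem that a finitely generated virtually free group is the fundamental group of a finite graph of finite groups, equivalently that it acts cocompactly on a simplicial tree with finite stabilisers. The model case is a free product $A * B$ of finite groups: taking $X = (A \setminus \{1\}) \cup (B \setminus \{1\})$, the geodesics are exactly the syllable-reduced alternating words, and a word fails to be geodesic precisely when it contains a length-two subword lying in a single factor, so $G$ is $2$-locally excluding. For the general case I would choose a generating set adapted to the finitely many finite vertex groups, show that geodesic length equals the reduced-path length in the Bass--Serre tree, and verify that each of the finitely many ways a word can fail to be reduced is witnessed by a subword of bounded length; converting these into length-reducing rules yields the required Dehn algorithm.

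The implication (iii) $\Rightarrow$ (i) is the crux. First, (iii) already forces $G$ to be word-hyperbolic: any nonempty $w =_G 1$ cannot be a $k$-local geodesic, so it contains a non-geodesic subword of length at most $k$, giving a genuine Dehn algorithm for the identity. The additional strength of (iii) --- that \emph{every} word, not merely a representative of $1$, is reduced to a geodesic by local moves --- is what must be converted into tree-likeness. My preferred route is to establish Manning's bottleneck property for the Cayley graph $\Gamma$: for a geodesic segment from $x$ to $y$ with midpoint $m$, every path from $x$ to $y$ should pass within a uniformly bounded distance of $m$. The idea is that a path avoiding a large ball about $m$ could be spliced with subsegments of the geodesic to build arbitrarily long words all of whose length-$k$ subwords are geodesic, yet which are themselves not geodesic, contradicting (iii). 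The bottleneck property implies that $\Gamma$ is quasi-isometric to a tree, and a finitely generated group quasi-isometric to a tree is virtually free. An alternative is to run the length-reducing rules as the stack operations of a pushdown automaton recognising the word problem and invoke the Muller--Schupp theorem.

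The main obstacle is the passage from the \emph{local} determinacy of geodesics in (iii) to a \emph{global} tree-like statement with uniform constants. Concretely, the difficulty is to control geodesic bigons and triangles well enough --- using only the finite excluded set $F$ --- to produce the uniform bottleneck constant, or, on the automaton route, to bound the stack height. Showing that no long non-geodesic can evade all length-$k$ windows, which is exactly what a failure of the bottleneck property would manufacture, is where the real combinatorial work lies.
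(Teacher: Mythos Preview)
Your (ii) $\Leftrightarrow$ (iii) matches the paper exactly. For the two substantive implications your outline is sound, but the paper's execution pivots on a single technical lemma you have not isolated, and its routes differ from your preferred ones.

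The lemma: if $G$ is $k$-locally excluding over $X$, $w$ is geodesic, and $x \in X$, then $l_G(wx)-l(w) \in \{-1,0,1\}$ and its value is determined by the suffix of $w$ of length $2k-2$. This is exactly what makes your \emph{alternative} route for (iii) $\Rightarrow$ (i) --- the pushdown automaton plus Muller--Schupp --- go through cleanly, and it is the route the paper actually takes. The automaton keeps a geodesic for the prefix read so far on the stack; on reading $x$ it pops $2k-2$ symbols, replaces the popped word times $x$ by a geodesic, and pushes back. No bound on stack \emph{height} is needed or available (your aside about bounding stack height is off target); what is bounded is the amount popped per step. Your preferred bottleneck approach is a genuine alternative and may well work, but the splicing step --- manufacturing a long $k$-local geodesic that is not geodesic from a path that avoids a ball --- is not justified by what you wrote; controlling the joins so that every length-$k$ window stays geodesic is exactly the delicate point, and you have only asserted it.

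For (i) $\Rightarrow$ (iii) the paper does not argue globally on the Bass--Serre tree. It proceeds inductively along the graph-of-finite-groups decomposition, proving two closure lemmas: if $H$ is $k$-locally excluding over $X$ and $K$ is finite with $A = H \cap K$, then $H *_A K$ is $(3k-2)$-locally excluding over $X \cup (K \setminus A)$; and similarly for an HNN extension of $H$ over a finite subgroup, with the stable letter and its $A$-translates adjoined. The suffix lemma above is again the engine, used to localise where a reduction in a syllable $a_i w_i b_i$ can occur. Your direct tree approach could be made to work, but ``each way a word can fail to be reduced is witnessed by a subword of bounded length'' is precisely the nontrivial content; the paper's bound grows as $3k-2$ at each amalgamation/HNN step, so it genuinely depends on the complexity of the decomposition, which your sketch does not anticipate.
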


\section{Proof of Theorem~\ref{main}}
The equivalence of (ii) and (iii) is straightforward.
Assume (ii), and let $\mathcal{R}$ be a set of length-reducing rewrite rules
with the specified property. Let $k$ be the maximal length of a left hand
side of a rule in $\mathcal{R}$.  Then a $k$-local geodesic over $X$ cannot
have the left hand side of any rule in $\mathcal{R}$ as a subword, and
so it must be geodesic. Conversely, assume (iii) and let $\mathcal{R}$ 
be the set of all rules $u \rightarrow v$ in which $l(v) < l(u) \le k$
and $u =_G v$. Then repeated application of rules in $\mathcal{R}$ reduces
any word to a $k$-local geodesic which, by (iii), is a geodesic.

The main part of the proof consists in showing that (i) and (iii) are
equivalent.  We start with a useful lemma.

\begin{lemma}\label{suffixreduction}
Let $G$ be a group with finite generating set $X$, let $k>0$ be an integer,
and suppose that $G$ is $k$-locally excluding over $X$.
Let $w$ be a geodesic word over $X$, and let $x \in X$. Then\\
(i) $l_G(wx)$ is equal to one of $l(w)+1$, $l(w)$, $l(w)-1$.\\
(ii) $wx$ is geodesic (that is, $l_G(wx)=l(w)+1$) if and only if $vx$
is geodesic, where $v$ is the suffix of $w$ of length
$k-1$ (or the whole of $w$ if $l(w)<k-1$).\\
(iii) $l_G(wx)-l(w)=l_G(v'x)-l(v')$, where $v'$ is the suffix of 
$w$ of length $2k-2$ (or the whole of $w$ if $l(w)<2k-2$).\\
\end{lemma}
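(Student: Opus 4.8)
The plan is to handle the three parts in turn, using throughout the defining consequence of the $k$-locally excluding hypothesis: a word over $X$ is geodesic if and only if it is a $k$-local geodesic, i.e.\ all of its subwords of length at most $k$ are geodesic. Part (i) is purely metric and needs no hypothesis: since each generator has length $1$, the triangle inequality gives $l_G(wx)\le l_G(w)+1=l(w)+1$, while $l_G(w)\le l_G(wx)+1$ yields $l_G(wx)\ge l(w)-1$, so $l_G(wx)\in\{l(w)-1,l(w),l(w)+1\}$.

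For part (ii) I would note that $wx$ is geodesic iff it is a $k$-local geodesic. Its subwords of length at most $k$ are either subwords of $w$, which are automatically geodesic since $w$ is, or suffixes $sx$ with $s$ a suffix of $w$ of length at most $k-1$; every such $s$ is a suffix of $v$, so every such $sx$ is a subword of $vx$. Hence all of them are geodesic precisely when $vx$ is (a subword of a geodesic is geodesic, and conversely $vx$ is itself one of these subwords), which is the asserted equivalence.

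Part (iii) is the crux. Writing $w=p'v'$ with $v'$ the suffix of length $2k-2$ (the claim being trivial when $l(w)<2k-2$, as then $v'=w$), I would first dispose of the geodesic case: by part (ii) applied to both $w$ and $v'$, which share the same length-$(k-1)$ suffix $v$, the word $wx$ is geodesic iff $vx$ is iff $v'x$ is, so both sides of (iii) equal $+1$ together. It remains to treat the case where $wx$ (equivalently $v'x$) is \emph{not} geodesic, where the goal becomes the ``confinement'' identity $l_G(wx)=l(p')+l_G(v'x)$, which is exactly (iii); its easy inequality $l_G(wx)\le l(p')+l_G(v'x)$ follows by substituting a geodesic representative for $v'x$.

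For the reverse inequality I would reduce $wx$ to a geodesic by repeatedly replacing a non-geodesic subword of length at most $k$ by an equal geodesic word, each replacement strictly shortening the word. The decisive observation is that, by part (i), the total decrease in length is $l(wx)-l_G(wx)\le 2$, so \emph{at most two} replacement steps occur. Since $w$ is geodesic, the first replacement must involve the final letter $x$ and so lies in the last $k$ letters; any new non-geodesic subword it creates must straddle the resulting junction and therefore reaches at most $k-1$ letters further to the left, keeping everything within the last $(k-1)+(k-1)=2k-2$ letters, and a short count of positions confirms that the single possible second replacement still lies strictly to the right of $p'$. After at most two steps we reach a geodesic word $p'g^*$ with $g^*=_G v'x$, giving $l_G(wx)=l(p')+l(g^*)=l(p')+l_G(v'x)$. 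The main obstacle is precisely this control of the cascade of newly created non-geodesic subwords; it is the ``at most two steps'' bound, forced by part (i), that prevents the reductions from propagating indefinitely leftward and pins the whole process inside the window of length $2k-2$.
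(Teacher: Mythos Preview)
Your proof is correct and follows essentially the same route as the paper's: both arguments use part (i) to bound the total length drop by $2$, make a first reduction in the length-$k$ suffix, and then use the $(k-1)$-letter buffer to confine the (at most one) remaining reduction to the window of length $2k-2$. The only cosmetic difference is that the paper splits into cases on whether the first replacement shortens by $0$ or $1$, whereas you phrase it uniformly as ``at most two reduction steps'' with a position count; the content is the same.
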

\begin{proof}
The three possibilities for $l_G(wx)$ follow from the fact that $w$
is geodesic and $x$ is a single generator.
(ii) is an immediate consequence of $G$ being $k$-locally excluding.
(iii) follows from (ii) when $wx$ is geodesic, so suppose not.
Write $w=uv$ with $v$ as defined in (ii),
and let $z$ be a geodesic representative of $vx$.
Since $v$ is geodesic, $l(z)$ is either $l(v)$ or $l(v)-1$.
In the second case $uz$ is geodesic, so $l_G(wx)-l(w)=l_G(vx)-l(v)
=l_G(v'x)-l(v')= -1$ and (iii) follows.
In the first case ($l(z)=l(v)$)  write $w=u'v''v$ with $v'=v''v$,
so $l(v'')= k-1$ provided that $u'$ is non-empty.
Now $wx = u'v''vx =_G u'v''z$ where $l(u'v''z) = l(w)$,
and either $l_G(wx) = l(u'v''z) = l(w)$ or $l_G(wx) = l(u'v''z)-1 = l(w)-1$. 
So at most one length reduction occurs in the word $u'v''z$,
and since $u'v''$ is geodesic, that length reduction must occur, if at all,
within the subword $v''z =_G v'x$.
Part (iii) follows from this.
\end{proof}

We are now ready to prove that (iii) implies (i) in Theorem~\ref{main}.

\begin{proposition} Suppose that $G$ is a group with finite generating set $X$
and that the geodesics over $X$ are  $k$-locally excluding for some $k>0$.
Then $G$ is virtually free.
\end{proposition}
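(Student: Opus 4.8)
The plan is to exploit the finite-state control supplied by Lemma~\ref{suffixreduction} in order to show that the Cayley graph $\Gamma$ of $G$ over $X$ is quasi-isometric to a tree, and then to invoke the standard fact that a finitely generated group quasi-isometric to a tree is virtually free. Before starting I would record two reductions. First, since $G$ is $k$-locally excluding it satisfies~(iii), hence~(ii), so $G$ admits a Dehn algorithm and is therefore word-hyperbolic; I am thus free to use thin triangles and the associated constant $\delta$. Second, finite groups are trivially virtually free, so I may assume $G$ is infinite.

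The engine of the argument is the observation, drawn from Lemma~\ref{suffixreduction}(iii), that the effect of appending a generator to a geodesic --- whether its length rises, stays equal, or drops --- is determined by the suffix of length $2k-2$. I would package this as a deterministic finite automaton $M$ whose states are the geodesic words of length at most $2k-2$ (the possible terminal suffixes), with transitions recording the action of each generator on length. Running $M$ along a geodesic shows that the set of generators continuing it geodesically --- its ``geodesic cone type'' --- depends only on the bounded terminal suffix, so there are finitely many cone types and they are suffix-determined. The crucial next step is to upgrade this from a statement about continuations to one about the global shape of $\Gamma$: I would try to show that any two geodesics from $1$ to a common element $g$ agree letter-by-letter except within a suffix of bounded length, and more generally that geodesics into nearby vertices share all but a bounded terminal segment. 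Tree-likeness is precisely the assertion that such fellow-travelling geodesics, once separated, cannot reconverge except through a bounded region.

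Concretely I would aim to establish Manning's bottleneck property: there is a constant $\Delta$ such that for every pair $x,y$ and every midpoint $m$ of a geodesic $[x,y]$, any path from $x$ to $y$ passes within $\Delta$ of $m$. The intended mechanism is that a path making a large detour away from $m$ would, after replacing its pieces by geodesics and applying thinness, produce a pair of geodesics that separate and reconverge; splicing an initial segment of one to a terminal segment of the other yields an arbitrarily long word all of whose short subwords are geodesic yet which is itself far from geodesic, contradicting $k$-local exclusion. Granting the bottleneck property, $\Gamma$ is quasi-isometric to a tree, and a finitely generated group quasi-isometric to a tree --- equivalently, by Stallings' ends theorem together with Dunwoody accessibility (available since $G$ is hyperbolic), one acting properly cocompactly on a tree with finite stabilisers --- is virtually free.

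The main obstacle is exactly this globalisation. Lemma~\ref{suffixreduction} controls only a single generator appended at the end of a geodesic, that is, behaviour at one scale and at one location, whereas the bottleneck property concerns all scales and arbitrary detours. Converting the local, suffix-determined control into a genuine no-reconvergence statement is where the full force of $k$-local exclusion --- as opposed to mere regularity of the geodesic language, or the finitely-many-cone-types property that every hyperbolic group enjoys --- must be combined with thin triangles. In particular one must rule out the one-ended case, in which the boundary is connected and no bottleneck can hold; the delicate point is to show that in a $k$-locally excluding group a geodesic bigon or triangle cannot be ``fat'', so that separated geodesics stay separated and the decomposition of $\Gamma$ into cone types is organised along a tree whose pieces are finite.
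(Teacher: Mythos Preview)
Your approach is genuinely different from the paper's, and considerably harder. The paper does not argue geometrically at all: it observes that Lemma~\ref{suffixreduction}(iii) is exactly what is needed to build a deterministic pushdown automaton recognising the word problem of $G$ --- the stack holds a geodesic representative of the prefix read so far, and upon reading a new generator one pops the top $2k-2$ symbols, appends the generator, reduces to a geodesic, and pushes the result back. By the theorem of Muller and Schupp, a group with context-free word problem is virtually free. This is essentially a two-line argument once Lemma~\ref{suffixreduction} is in hand, and it sidesteps hyperbolicity, thin triangles, the bottleneck criterion, and accessibility entirely.

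Your plan, by contrast, has a real gap that you yourself flag but do not close. The splicing mechanism --- concatenating an initial segment of one geodesic with a terminal segment of another to manufacture a long $k$-local geodesic that is not geodesic --- need not work: the length-$k$ subwords straddling the splice point have no reason to be geodesic, so no contradiction with $k$-local exclusion follows. This is not a minor technicality; it is precisely where the argument must distinguish $k$-locally excluding groups from arbitrary hyperbolic groups, which also enjoy finitely many cone types and thin bigons yet need not be virtually free. Lemma~\ref{suffixreduction} tells you how a geodesic \emph{extends} at its end, but says nothing about what happens when two distinct geodesic words are glued at an interior vertex, so your passage from local suffix control to the global bottleneck property remains unsupported. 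The Muller--Schupp route avoids this difficulty altogether because the stack \emph{is} the geodesic word: there is never any splicing, only one-letter extension, which is exactly the operation Lemma~\ref{suffixreduction} governs.
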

\begin{proof}
We prove this result by demonstrating that the word problem for $G$
can be solved on a pushdown automaton,
and then using Muller and Schupp's classification of groups with
this property~\cite{MullerSchupp}.

The automaton to solve the word problem operates as follows.
Given an input word $w$, the automaton reads $w$ from left to right.
At any point, the word on the stack
is a geodesic representative of the word read so far. 
Suppose at some point it has $u$ on the stack and then reads a symbol $x$.
It pops $2k-2$ symbols off the stack (or the whole of $u$ if $l(u)<2k-2$),
appends $x$ to the end of the word so obtained,
replaces it by a geodesic representative if necessary, and 
appends that reduced word to the stack.
It follows from Lemma~\ref{suffixreduction} that the word now on the stack
is a geodesic representative of $ux$, and hence of the word read so far.

So $w$ represents the identity in $G$ if and only if the stack is empty once
all the input has been read and processed, and it follows
immediately from~\cite{MullerSchupp} that $G$ is virtually free.
\end{proof}

It remains to prove that (i) implies (iii),
namely that the set of geodesics of a virtually free group with an
appropriate generating set is $k$-locally excluding for some $k>0$.

It is proved in~\cite[Theorem 7.3]{ScottWall}
that a finitely generated group $G$ is virtually free
if and only if it arises as follows:
$G$ is the fundamental group of a graph of groups $\Gamma$ with
finite vertex groups $G_1,\ldots G_n$, and finite edge groups $G_{i,j}$
for certain pairs $\{i,j\}$.

There are various alternative and equivalent definitions of
the fundamental group of a graph of groups, but the one that is most
convenient for us is~\cite[Chapter 1, Definition 3.4]{DD}.
As is pointed out in~\cite[Chapter 1, Example 3.5 (vi)]{DD},
such a group $G$ can be built up as a sequence of groups
$1=H_1,H_2, \ldots, H_r = G$, where each $H_{i+1}$ is defined either
as a free product with amalgamation (over an edge group)
of $H_i$ with one of the vertex groups $G_i$,
or as an HNN extension of $H_i$ with associated subgroups isomorphic to
one of the edge groups $G_{i,j}$.
The amalgamated free products are done first, building
up along a maximal tree, and then the HNN extensions are
done for the remaining edges in the graph.

So from now on we shall assume that our virtually free group $G$
can be constructed in this way, where the groups $G_i$ and
$G_{i,j}$ are all finite.
Hence the result follows from repeated application of the
following two lemmas, of which the proofs are very similar.

Notice that the generating set $X$ over which $G$ is $k$-locally
excluding will contain all non-identity elements of each of the vertex groups,
$G_i$ and also certain other elements arising from the HNN extensions,
which are specified in Lemma~\ref{lemma2}.

\begin{lemma}
\label{lemma1}
Let $H$ be a group which is $k$-locally excluding over a
generating set $X$ for some $k \geq 2$, let $K$ be a finite group,
let $A = H \cap K$,
and suppose that  $A\setminus \{1\} \subset X$.

Then $G=H *_A K$ is $k'$-locally excluding over
$X' :=X \cup (K\setminus A)$, where $k'= 3k-2$.
\end{lemma}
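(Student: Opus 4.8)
The plan is to analyse geodesics in $G = H *_A K$ through the normal form theorem for amalgamated free products (see \cite{DD}) and to show that geodesity over $X'$ is detected by forbidding finitely many substrings of length at most $k' = 3k-2$. I would begin by recording the two features of $X'$ that make this generating set convenient: since $K$ is finite and $K\setminus A\subset X'$ while $A\setminus\{1\}\subset X$, \emph{every} element of $K$ is represented over $X'$ by a word of length at most $1$; and every element of $A$ has length at most $1$ over $X$. I would also check that $H$ is isometrically embedded, that is $l_G(h)=l_H(h)$ for $h\in H$: by the normal form theorem any representative of $h\in H$ using letters of $K\setminus A$ must have all its $K$-syllables cancel, and such cancellation only shortens a word when it passes through $A$, which never saves length because $A$-elements already have length $1$.

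Next I would set up the \emph{block decomposition}. Given a word $w$ over $X'$, split it at its $K\setminus A$ letters as $w = w_0 y_1 w_1 \cdots y_p w_p$, where each $y_i\in K\setminus A$ and each $w_i$ is a word over $X$; writing $\bar w_i$ for the element of $H$ that $w_i$ represents, we obtain $g =_G \bar w_0\, y_1 \bar w_1 \cdots y_p \bar w_p$. The structural heart of the proof is the additivity of geodesic length over syllables: if each $w_i$ is $H$-geodesic and each interior $\bar w_i$ (for $1\le i\le p-1$) lies outside $A$, then the displayed product is reduced and $l_G(g) = \sum_i l_H(\bar w_i) + p = l(w)$, so $w$ is geodesic; conversely, a geodesic $w$ must have every block $H$-geodesic (otherwise replacing a block by a shorter $H$-representative shortens $w$) and no interior or boundary $A$-collapse. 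Establishing this additivity — equivalently, that no long-range cancellation can beat the syllable-by-syllable bound — is exactly where the normal form theorem does the work, and it is the step I expect to be the main obstacle, since one must rule out that dipping into the factor $K$ ever shortens an $H$-heavy product.

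Finally I would turn the characterisation into a local test and extract the constant. Because $H$ is $k$-locally excluding, a block fails to be $H$-geodesic precisely when it contains a non-geodesic $X$-subword of length at most $k$, which is a subword of $w$ of length at most $k\le k'$. A collapsing interior junction $y_i w_i y_{i+1}$ with $\bar w_i\in A$ forces, once the block is known to be geodesic, $l(w_i)\le 1$, so it surfaces as a short substring; the boundary collapses $ya$ and $ay$ are shorter still. To make the test \emph{uniform} — so that examining a single window simultaneously certifies that the neighbouring blocks really are geodesic and that no junction collapses — I would take the forbidden set to be all words over $X'$ that are non-geodesic in $G$ and have length at most $3k-2$. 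The role of Lemma~\ref{suffixreduction}, applied inside the factor $H$, is to control how the letters flanking a $K$-syllable affect lengths: its relevant suffix length is $2k-2$, and carrying this $2k-2$ letters of $H$-context across one $K$-syllable together with a $k$-letter geodesity certificate is what forces the window size $2k-2+k = 3k-2$. I would then verify that any $(3k-2)$-local geodesic has geodesic blocks and no collapsing junctions, and conclude by the additivity above that it is genuinely geodesic, so that $G$ is $k'$-locally excluding over $X'$.
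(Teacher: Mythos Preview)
Your block decomposition and invocation of the normal form theorem match the paper's setup, but the ``additivity'' statement you call the structural heart of the proof is false as stated, and this is a genuine gap. It is \emph{not} true that if every block $w_i$ is $H$-geodesic and every interior $\bar w_i$ lies in $H\setminus A$ then $w$ is $G$-geodesic. The reason is that an element of $A$ can slide across a $K$-letter at no cost: for $b\in A$ and $y\in K\setminus A$ one has $w_i\,y = (w_i b)(b^{-1}y)$ with $b^{-1}y$ still a single generator in $X'$, and $w_i b$ may well be strictly shorter than the geodesic $w_i$. Concretely, take $H=D_\infty=\langle s,t\mid s^2=t^2=1\rangle$ (which is $2$-locally excluding over $\{s,t\}$), $A=\{1,s\}$, any finite $K\supset A$, and $y\in K\setminus A$: the word $(ts)\,y\,t$ has $H$-geodesic blocks and interior block $t\notin A$, yet $(ts)\,y\,t =_G t\,(sy)\,t$ is shorter. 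So the conditions you isolate do not characterise $G$-geodesics, and the final ``local test'' you propose cannot be derived from them.

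What the paper does instead is precisely to account for this $A$-sliding. Comparing $w$ with a genuine geodesic $w'$ of the same element, the normal form theorem gives $w'_i =_H a_i w_i b_i$ for suitable $a_i,b_i\in A$, and the case analysis is on which of $a_i w_i$, $w_i b_i$, or $a_i w_i b_i$ first drops below $l(w_i)$. In the one-sided cases Lemma~\ref{suffixreduction} produces a non-geodesic subword of $w$ of length at most $2k-1$ straddling a single $K$-letter; in the two-sided case one shows $l(w_i)\le 3k-4$, whence $k_i w_i k_{i+1}$ has length at most $3k-2$ and is itself non-geodesic. That is where the constant $3k-2$ actually comes from, not from the $2k-2+k$ bookkeeping you sketch. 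Your proposal has the right ingredients (block decomposition, normal forms, Lemma~\ref{suffixreduction}) but is missing the $a_i w_i b_i$ comparison that handles the $A$-coset ambiguity at each junction; without it the argument does not go through.
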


\begin{lemma}\label{lemma2}
Let $H$ be a group which is $k$-locally excluding over a generating set $X$
for some $k \geq 2$,
let $A$ and $B$ be isomorphic finite subgroups of $H$ which satisfy
$A\setminus \{1\} \subset X$ and $B\setminus \{1\} \subset X$, and let $G =
\langle H,t \rangle$ be the HNN extension in which $tat^{-1} = \phi(a)$ for
all $a \in A$, where $\phi:A \to B$ is an isomorphism.

Then $G$ is $k'$-locally excluding over
$X' := X \cup\, \{ ta \mid a \in A \}\cup\, \{ t^{-1}b \mid b \in B \}$,
where $k'= 3k-2$.
(Note that the elements of $X'$ in the set $\{ t^{-1}b \mid b \in B \}$ are
the inverses of those in the set $\{ ta \mid a \in A \}$.)
\end{lemma}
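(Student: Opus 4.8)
The plan is to follow the same strategy as in Lemma~\ref{lemma1}, with the stable letters $t^{\pm1}$ (carried by the new generators $ta$ and $t^{-1}b$) playing the role that the coset letters of $K\setminus A$ play in the amalgamated case, and with Britton pinches playing the role of the collapse of a $K$-syllable. First I would record the elementary identities in $G$: for $a,a'\in A$ and $b,b'\in B$ we have $(ta)(t^{-1}b)=\phi(a)b\in B$ and, symmetrically, $(t^{-1}b)(ta)=\phi^{-1}(b)a\in A$ (the \emph{pinches}), together with the \emph{absorptions} $(ta)a'=t(aa')$, $b'(ta)=t(\phi^{-1}(b')a)$, $a'(t^{-1}b)=t^{-1}(\phi(a')b)$ and $(t^{-1}b)b'=t^{-1}(bb')$, each of which rewrites a length-$2$ word over $X'$ as a single generator of $X'$. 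Since $A\setminus\{1\}$ and $B\setminus\{1\}$ lie in $X$, every element of $A$ or $B$ has $X$-length at most $1$; this is what makes pinches and absorptions visible in bounded windows.

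Next I would analyse the shape of an arbitrary word $w$ over $X'$. Reading off the occurrences of the $t$-generators gives a syllable decomposition
\[ w = p_0 s_1 p_1 s_2 \cdots s_m p_m, \]
where each $s_i\in\{ta\}\cup\{t^{-1}b\}$ and each $p_i$ is a (possibly empty) word over $X$, hence an element of $H$. I would then propose that $w$ is geodesic in $G$ over $X'$ precisely when it avoids three kinds of defect: (G1) some $p_i$ is non-geodesic in $H$ over $X$; (G2) a \emph{pinch} is present, i.e.\ some $s_i p_i s_{i+1}$ collapses because the $s_i,s_{i+1}$ have opposite signs and $p_i$ represents an element of the relevant subgroup $A$ or $B$; (G3) an \emph{absorption} is possible, i.e.\ a prefix of some $p_i$ can be pulled into the preceding $t$-generator, or a suffix into the following one, so as to shorten $w$. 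The forbidden set $F'$ will be the collection of all words over $X'$ of length at most $3k-2$ that are non-geodesic in $G$; the easy direction (geodesic implies $F'$-avoiding) is then immediate, since every member of $F'$ is by definition non-geodesic.

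The crucial point is to see that each of (G1)--(G3) is witnessed inside a window of length at most $3k-2$, so that $F'$-avoidance already forces (G1)--(G3). For (G1) this is exactly the hypothesis that $H$ is $k$-locally excluding: a non-geodesic $p_i$ contains a non-geodesic $X$-subword of length at most $k$. For (G2) I would use that, in an $F'$-avoiding word, every $p_i$ is already geodesic in $H$, so a syllable representing an element of $A$ or $B$ has length at most $1$, whence a pinch lives in a window of length at most $3$. The substantial case is (G3): whether a prefix of $p_{i}$ can be absorbed into the preceding $s_i$ is \emph{not} determined by the first letter of $p_i$ alone, but the prefix-analogue of Lemma~\ref{suffixreduction}(ii)--(iii) shows that it is determined by the prefix of $p_i$ of length $k-1$ (and dually for suffixes). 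A routine case analysis then bounds every witness to a defect of type (G3) by a suffix of length at most $k-1$ of one $H$-syllable, a bounded transition region involving one $t$-generator together with an $A$- or $B$-valued segment, and a prefix of length at most $k-1$ of the next $H$-syllable, yielding the stated bound $3k-2=(k-1)+k+(k-1)$.

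The main obstacle is the converse: proving that an $F'$-avoiding word $w$ (equivalently, a word satisfying (G1)--(G3)) really is geodesic, rather than merely locally so. For this I would argue globally using the normal form theory of HNN extensions~\cite{DD}. Britton's lemma shows that a pinch-free word has no cancellation among its stable letters, so its number $m$ of $t$-generators equals the $t$-length of the element $\bar w$, an invariant of $\bar w$; hence every word over $X'$ representing $\bar w$ uses at least $m$ $t$-generators. It then remains to show that, among all representatives with exactly $m$ stable letters, conditions (G1) and (G3) force the total $X$-length of the syllables $p_0,\ldots,p_m$ to be minimal: moving $A$- and $B$-elements across the stable letters is the only freedom available, and (G3) says precisely that no such move shortens any syllable, while (G1) says each syllable is already geodesic. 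Making this minimisation argument precise—keeping track of how the transversals of $A$ and $B$ interact with geodesic lengths in $H$—is the technical heart of the proof, and the same bookkeeping, with $K\setminus A$ letters in place of the $t$-generators, yields Lemma~\ref{lemma1}.
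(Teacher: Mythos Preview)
Your overall architecture---syllable decomposition, Britton's lemma to pin down the number of stable letters, then a local minimality argument on the $H$-syllables---is exactly the paper's, but your list (G1)--(G3) is incomplete, and the gap is precisely the ``technical heart'' you defer at the end.

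Condition (G3) only tests \emph{one-sided} absorption: whether some $a$ in the relevant associated subgroup satisfies $l_H(ap_i)<l_H(p_i)$, or some $b$ satisfies $l_H(p_ib)<l_H(p_i)$. But the freedom in the HNN normal form allows a \emph{two-sided} move on a single syllable: replace $p_i$ by (a geodesic for) $a p_i b$ and absorb $a^{-1}$ and $b^{-1}$ into the flanking stable letters $s_i,s_{i+1}$. It is perfectly possible that $l_H(ap_i)=l_H(p_ib)=l_H(p_i)$ for all admissible $a,b$, and yet $l_H(ap_ib)<l_H(p_i)$ for some particular pair. A word with such a syllable satisfies your (G1)--(G3) but is not geodesic, so your minimisation claim ``(G3) says precisely that no such move shortens any syllable'' is false as stated, and the argument breaks here.

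The paper's proof isolates exactly this case and disposes of it by a length bound: if neither $a p_i$ nor $p_i b$ is shorter than $p_i$ but $a p_i b$ is, then $l(p_i)\le 3k-4$. One writes $p_i=u'uv'$ with $l(u')=l(v')=k-1$ and $l(u)\ge k-1$; by Lemma~\ref{suffixreduction} and its mirror image one has $a p_i b =_H y u z$ with $l(y)=l(z)=k-1$, and since $H$ is $k$-locally excluding any reducible length-$k$ subword of $yuz$ lies entirely in $yu$ or in $uz$, forcing a one-sided reduction after all---contradiction. Hence the whole block $s_i p_i s_{i+1}$ has length at most $3k-2$ and is itself a non-geodesic subword of $w$. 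This two-sided case is what actually produces the constant $k'=3k-2$; the one-sided (G3) defects are already witnessed in windows of length at most $2k-1$ (Lemma~\ref{suffixreduction}(iii) needs the suffix of length $2k-2$, plus one stable letter), so your decomposition $(k-1)+k+(k-1)$ does not explain the bound.
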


\begin{proof}[Proof of Lemma~\ref{lemma1}]
Let $w$ be a $k'$-local geodesic of $G$ over $X'$.
We want to prove that $w$ is geodesic.
Suppose not, and let $w'$ be a geodesic word that represents the
same element of $G$.
Note that, since $A \setminus \{1\} \subseteq X'$, we cannot have
$w \in A$, because that would imply that $l(w) \le 1$.

We can write $w  = w_0k_1w_1k_2\cdots k_rw_r$, where
each $k_i \in K \setminus A$ and each $w_i \in X^*$.
Either $w_0$ or $w_r$ could be the empty word but,
since $K \setminus \{1\} \subseteq X'$ and $w$ is a $k'$-local geodesic with
$k' > k \ge 2$, $w_i$ must be non-empty for $0 < i < r$.  The $2$-locally
excluding condition also implies that no non-empty
$w_i$ is a word in $A^*$.
In fact, since $H$ is by assumption $k$-locally excluding over $X$
and $k' > k$, the words $w_i$ are geodesics as elements of $H$ over $X$, and
so the non-empty $w_i$ represent elements of $H \setminus A$.

Similarly, write $w' = w'_0k'_1w'_1k'_2\cdots k'_{r'}w'_{r'}$.

Now the normal form theorem for free products with
amalgamation (see~\cite[Thm 4.4]{MKS} or the remark
following~\cite[Chapter 4, Theorem 2.6]{LyndonSchupp})
states that, if $C$ is a union of sets of
distinct right coset representatives of $A$ in $H$ and in $K$,
then any element of the amalgamated
product can be written uniquely as a product of the form
$ac_1\cdots c_s$, where $a \in A$, each $c_i \in C$,
and alternate $c_i$'s are in $H \setminus A$ and $K \setminus A$.

Since each $k_i \in K \setminus A$ and each non-empty
$w_i \in H \setminus A$, the syllable length $s$ of the group element
represented by $w$ is equal to the number of non-trivial words
$w_0$, $k_1$, $w_1, \ldots, k_r$, $w_r$,
where $c_1 \in H \setminus A$ if and only if $w_0$ is
non-trivial, and $c_s \in H \setminus A$ if and only if
$w_r$ is non-trivial. The same applies to $w'$, and hence
$r=r'$, $w_0$ and $w'_0$ are either
both empty or both non-empty, and similarly for $w_r$ and $w'_r$.

Furthermore, $w_r$ and $w_r'$ are in the same right coset of $A$ in
$H$, and so $w_r' =_H a_rw_r$ for some $a_r \in A$. Then
$k_r$ and $k_r'a_r$ are in the same right coset of $A$ in $K$,
and so $k_r =_K b_{r-1} k_r' a_r$ for some $b_{r-1} \in A$. 
Carrying on in this manner, we can show that there exist
$a_i,b_i \in A$ ($0 \le i \le r$) such that
$w'_i =_H a_iw_ib_i$ and $k'_i =_K b_{i-1}^{-1}k_ia_{i}^{-1}$,
where $a_0 = b_r = 1$.


Since $r = r'$ and $l(w') < l(w)$, we must have $l(w'_i) < l(w_i)$
for some $i$.
So one of the words $a_iw_i$, $w_ib_i$, $a_iw_ib_i$ must reduce
(in $H$ over $X$) to a word strictly shorter than $w_i$.

Suppose first that $w_ib_i$ reduces to a word strictly shorter than $w_i$.
Since $b_r = 1$, we have $i<r$ and so $k_{i+1}$ exists.
Then, by Lemma~\ref{suffixreduction}, $l_H(v'_ib_i) = l(v'_i) - 1$,
where $v'_i$ is the suffix of $w_i$ of length $2k-2$,
or the whole of $w_i$ if $l(w_i) < 2k-2$.
Now, since $v_i' k_{i+1} = _G (v_i' b_i)(b_i^{-1}k_{i+1})$
with $b_i^{-1}k_{i+1} \in K$, we see that the suffix $v_i'k_{i+1}$ of
$w_ik_{i+1}$, which has length at most $2k-1$,
is a non-geodesic word in $G$ and, since $2k-1 < k'$, this
contradicts the assumption that $w$ is a $k'$-local geodesic.
 
The case in which $a_iw_i$ reduces to a word of length less than $w_i$ is
similar (here we use a `mirror image' of Lemma~\ref{suffixreduction}),
and we find that $i>0$ and a prefix of $k_iw_i$ of length at most $2k-1$ is
non-geodesic, again contradicting the assumption that $w$ is a $k'$-local
geodesic.

It remains to consider the case where the reduction (in $H$ over $X$)
of $a_iw_ib_i$ is strictly shorter than $w_i$, but each of the reductions of
$a_iw_i$ and $w_ib_i$ have the same length as $w_i$.
Since neither $a_i$ nor $b_i$ can be trivial, we have $0 < i < r$, and
so $k_i$ and $k_{i+1}$ both exist.
We claim that $w_i$ has length at most $3k-4$.
For if not, we write $w_i = u'uv'$, where $l(u') = l(v') = k-1$ and $l(u)
\ge k-1$, and deduce from Lemma~\ref{suffixreduction} and its mirror image
that $a_iw_ib_i =_H yuz$, where $y,z \in X^*$ and $l(y)=l(z)=k-1$.
Then since $yuz$ reduces in H over $X$ and $H$ is $k$-locally excluding over
$X$, some subword of length $k$ must reduce. Such a subword must be a subword of
either $yu$ or $uz$, and so one of $a_iw_i$ or $w_ib_i$ does indeed reduce to
a word shorter than $w_i$, contradicting our assumption.
Hence $l(w_i) \le 3k-4$ as claimed.

Now $k_iw_ik_{i+1}$ has length $2 + l(w_i) \le 3k-2$, but
$k_iw_ik_{i+1} =_G (k_ia_i^{-1})w_i'(b_i^{-1}k_{i+1})$
with $k_ia_i^{-1}, b_i^{-1}k_{i+1} \in K$,
so $k_iw_ik_{i+1}$ is not a geodesic in $G$ over $X'$,
and once again we contradict our assumption that $w$ is a $k'$-local geodesic.
This completes the proof of Lemma~\ref{lemma1}.
\end{proof}

\begin{proof}[Proof of Lemma~\ref{lemma2}]
Let $w$ be a $k'$-local geodesic of $G$ over $X'$.
We want to prove that $w$ is geodesic.
Suppose not, and let $w'$ be a geodesic word that represents the
same element of $G$.

Write
$w  = w_0t_1^{\epsilon_1}w_1t_2^{\epsilon_2}w_2\cdots t_r^{\epsilon_r}w_r$,
where each $t_i$ is one of the generators of the form $t a$ ($a \in A)$, each
$\epsilon_i$ is $1$ or $-1$, and each $w_i$ is a word over $X$.
Since $k'>k$, $w$ is a $k$-local geodesic, so each word $w_i$ is geodesic as an
element of $H$.
So if $w_i$ represents a non-trivial  element of $A$ or of $B$,
then $w_i$ has length 1.  Hence, if $\epsilon_i = 1$ then we cannot have
$w_i \in A \setminus \{1\}$, and if $\epsilon_i = -1$ then we cannot have
$w_i \in B \setminus \{1\}$, because in those cases
$t^{\epsilon_i} w_i$ would be a non-geodesic subword of $w$ of length 2.
Also, if $w_i$ is empty with $0 < i < r$, then $\epsilon_i = \epsilon_{i+1}$.

Similarly, write $w'  = w_0'(t_1')^{\epsilon_1'}w_1'(t_2')^{\epsilon_2'}
w_2'\cdots (t'_{r'})^{\epsilon'_{r'}}w'_{r'}$.

Now the normal form theorem for HNN extensions
\cite[Chapter 4, Theorem 2.1]{LyndonSchupp}
states that if $C$ is a union of sets $H_A$ and $H_B$ of distinct
right coset representatives of $A$ and of $B$ in $H$, then any
element of the HNN extension $G$ can be written uniquely as a product of the
form $ht^{\varepsilon_1}c_1 \cdots t^{\varepsilon_s}c_s $,
where $h \in H$, each $\varepsilon_i$ is $1$ or $-1$,
each $c_i \in C$, and $c_i \in H_A$ or $c_i \in H_B$
when $\varepsilon_i = 1$ or $-1$, respectively.
Also, if $c_i = 1$ with $1 \le i < s$, then $\varepsilon_i=\varepsilon_{i+1}$.

For the normal form of the element of $G$ represented by both $w$ and $w'$,
it follows that $r=r' = s$ and  $\epsilon_i= \epsilon_i' =
\varepsilon_i$ for each $i$. Furthermore,
an inductive argument similar to the one in the proof of
Lemma~\ref{lemma1} shows that there are elements
$a_i,b_i \in A \cup B$ ($0 \le i \le r$) such that $w'_i =_H a_iw_ib_i$ and
$(t'_i)^{\epsilon_i} = b_{i-1}^{-1}(t_i)^{\epsilon_i}a_{i}^{-1}$,
where $a_0 = b_r = 1$.
We have $a_i \in A$ or $B$ when $\epsilon_i = 1$ or $-1$, respectively, and
$b_i \in B$ or $A$ when $\epsilon_{i+1} = 1$ or $-1$, respectively.


Since $r = r'$ and $l(w') < l(w)$, we must have $l(w'_i) < l(w_i)$
for some $i$.
So one of the words $a_iw_i$, $w_ib_i$, $a_iw_ib_i$ must reduce
(in $H$ over $X$) to a word strictly shorter than $w_i$.

Suppose first that $w_ib_i$ reduces to a word strictly shorter than $w_i$.
Since $b_r = 1$, we have $i<r$ and so $t_{i+1}$ exists.
Then, by Lemma~\ref{suffixreduction}, $l_H(v'_ib_i) = l(v'_i) - 1$,
where $v_i'$ is the suffix of $w_i$ of length $2k-2$,
or the whole of $w_i$ if $l(w_i) < 2k-2$.
Now, since $v_i' t_{i+1}^{\epsilon_{i+1}} =_G
(v_i' b_i)(b_i^{-1}t_{i+1}^{\epsilon_{i+1}})$ with
$b_i^{-1}t_{i+1}^{\epsilon_{i+1}} \in X'$, we see that the suffix
$v_i't_{i+1}^{\epsilon_{i+1}}$
of $w_it_{i+1}^{\epsilon_{i+1}}$, which has length
at most $2k-1$, is a non-geodesic word in $G$ and, since $2k-1 < k'$, this
contradicts the assumption that $w$ is a $k'$-local geodesic.
 
The case in which $a_iw_i$ reduces to a word of length less than $w_i$ is
similar (using the mirror image of Lemma~\ref{suffixreduction}), and
we find that $i>0$ and a prefix of $t_i^{\epsilon_i}w_i$ of length at most
$2k-1$ is non-geodesic, again contradicting the assumption that $w$ is a
$k'$-local geodesic.

It remains to consider the case where the reduction (in $H$ over $X$)
of $a_iw_ib_i$ is strictly shorter than $w_i$, but each of the reductions of
$a_iw_i$ and $w_ib_i$ have the same length as $w_i$.
Since neither $a_i$ nor $b_i$ can be trivial, we have $0 < i < r$, and
so $t_i$ and $t_{i+1}$ both exist.
We claim that $w_i$ has length at most $3k-4$.
For if not, we write $w_i = u'uv'$, where $l(u') = l(v') = k-1$ and $l(u) \ge
k-1$, and deduce from Lemma~\ref{suffixreduction} and its mirror image
that $a_iw_ib_i =_G yuz$, where $y,z \in X^*$ and $l(y)=l(z)=k-1$.
Then since $yuz$ reduces in $H$ over $X$ and $H$ is $k$-locally excluding
over $X$, some subword of length $k$ must reduce. Such a
subword must be a subword of either $yu$ or $uz$, and so one of $a_iw_i$ or
$w_ib_i$ does indeed reduce to a word shorter than $w_i$, contradicting our
assumption. Hence $l(w_i) \le 3k-4$ as claimed.

Now $t_i^{\epsilon_i}w_it_{i+1}^{\epsilon_{i+1}}$ has length
$2 + l(w_i) \le 3k-2$, but $t_i^{\epsilon_i}w_it_{i+1}^{\epsilon_{i+1}} =_G
(t_i^{\epsilon_i}a_i^{-1})w_i'(b_i^{-1}t_{i+1}^{\epsilon_{i+1}})$ with
$l_G(t_i^{\epsilon_i}a_i^{-1}) = l_G(b_i^{-1}t_{i+1}^{\epsilon_{i+1}}) =1$, 
so $t_i^{\epsilon_i}w_it_{i+1}^{\epsilon_{i+1}}$ is not a geodesic in $G$
over $X'$, and once again we contradict our
assumption that $w$ is a $k'$-local geodesic. This completes the proof
of Lemma~\ref{lemma2}.
\end{proof}

\end{document}